\newcommand{\bbC}{{\mathbb{C}}}
\newcommand{\bbR}{{\mathbb{R}}}
\numberwithin{equation}{section}
\newtheorem{Lem}[equation]{Lemma}
\newtheorem{Thm}[equation] {Theorem}
\newtheorem{Cor}[equation]{Corollary}
\title
[On degrees  in family ]
{On degrees in family of maps constructed via modular forms}
\author{ Goran Mui\'c}
\address{
  Department of Mathematics,
  Faculty of Science,
University of Zagreb,
Bijeni\v cka 30, 10000 Zagreb,
Croatia}
 \email{gmuic@math.hr}
\begin{document}

\begin{abstract}
  This paper is a continuation of our previous works (see Mui\' c in Monatsh. Math. 180, no. 3, 607--629,  (2016)) and (Mui\' c, Kodrnja in Ramanujan J. 55, no. 2, 393–-420, (2021))  where we have
  studied maps from $X_0(N)$ into $\mathbb P^2$ (and more general) constructed via modular forms of the same weight.
  In this short note we  study how degrees of the maps and degrees of the resulting curve change when we let modular forms vary. 
\end{abstract}

\subjclass{11F11, 11F23}
\keywords{}

\subjclass[2000]{11F11}
\keywords{modular forms,  modular curves, Riemann surfaces, birational equivalence}
\thanks{The  author acknowledges Croatian Science Foundation grant IP-2018-01-3628.}
\maketitle

\section{Introduction}\label{intr}

 In our earlier paper \cite{Muic2} we gave fairly general study  of complex holomorphic maps
 $X_0(N)\longrightarrow \mathbb P^2$ (and more general) and proved a formula for the degrees \cite[Theorem 1-4]{Muic2} described below in Theorem \ref{starithm}.
 Based on \cite[Theorem 1-4]{Muic2}, we   developed the test for  birationality of the maps (see the introduction in \cite{Muic2}).  Using these results, the problems of constructing birational maps
 into $\mathbb P^2$ has been studied in \cite{MuKo} with emphasis on the  explicit computations in SAGE.  The paper \cite{MuKo} constructs various models over
 $\mathbb C$ of $X_0(N)$ complementing previous works such as \cite{BKS}, \cite{bnmjk},
 \cite{sgal}, \cite{ishida}, \cite{Kodrnja1}, \cite{Muic}, \cite{MuMi}, \cite{mshi} and \cite{yy}. The purpose of the present short note is to study how degrees of the maps and degrees of the
 resulting curve change when we let modular forms vary.  The main result is Theorem \ref{mainthm} (see below).

We continue by recalling some standard facts from \cite{Miyake}.  Let $\mathbb H$ be the upper half--plane.
Then the group $SL_2(\bbR)$  acts on $\mathbb H$  as follows:
$$ g.z=\frac{az+b}{cz+d}, \ \ g=\left(\begin{matrix}a & b\\ c & d
\end{matrix}\right)\in SL_2(\bbR).
$$
We let $j(g, z)=cz+d$. The function $j$ satisfies the cocycle identity:
\begin{equation}\label{cocycle}
j(gg', z)=j(g, g'.z) j(g', z).
\end{equation}

Next, $SL_2(\bbR)$--invariant measure on $\mathbb H$ is defined by $dx dy
/y^2$, where the coordinates on $\mathbb H$ are written in a usual way 
$z=x+\sqrt{-1}y$, $y>0$. A discrete subgroup $\Gamma\subset
SL_2(\bbR)$ is called a Fuchsian group of the first kind if 
$$
\iint _{\Gamma \backslash \mathbb H} \frac{dxdy}{y^2}< \infty.
$$
Then, adding a finite number of points 
in $\bbR\cup \{\infty\}$ called cusps, $\mathcal F_\Gamma$ can be
compactified. In this way we obtain a compact Riemann surface 
$\mathfrak R_\Gamma$. Let $g(\Gamma)$ be the genus of $\mathfrak R_\Gamma$.  One of the most important examples are the groups
$$
\Gamma_0(N)=\left\{\left(\begin{matrix}a & b \\ c & d\end{matrix}\right) \in SL_2(\mathbb Z); \ \
    c\equiv 0 \ (\mathrm{mod} \ N)\right\}, \ \ N\ge 1.
$$
    We write $X_0(N)$ for    $\mathfrak R_{\Gamma_0(N)}$.  As usual we consider $\mathfrak R_\Gamma$ as a smooth irreducible projective curve over $\mathbb C$ with the field
    of rational functions $\mathbb C\left(\mathfrak R_\Gamma\right)$ to be the field of metomorphic functions on $\mathfrak R_\Gamma$.

Let $\Gamma$ be a Fuchsian group of the first kind. 
Let $\chi$ be a character $\Gamma \rightarrow\mathbb C^\times$
of finite order. Let $m\ge 1$. We consider the space $M_m(\Gamma, \chi)$
(resp., $S_m(\Gamma, \chi)$)  of all 
modular (resp., cuspidal) forms of weight $m$;  this is a space
of all holomorphic functions $f: \mathbb H\rightarrow \bbC$ 
such that $f(\gamma.z)=\chi(\gamma)j(\gamma, z)^m f(z)$ 
($z\in \mathbb H$, $\gamma\in \Gamma$) which are holomorphic (resp., holomorphic and vanish) at
every cusp for $\Gamma$. When $\chi$ is trivial, we write $M_m(\Gamma)$ and $S_m(\Gamma)$ instead of
$M_m(\Gamma, \chi)$ and $S_m(\Gamma, \chi)$, respectively.

Assume that $\dim M_m(\Gamma, \chi)\ge 3$.  
We select three linearly independent modular forms $f, g$, and $h$ in $M_m(\Gamma, \chi)$, and  
construct the  holomorphic map 
$\mathfrak R_\Gamma\longrightarrow \mathbb P^{2}$ which is uniquely determined by being initially defined  by
\begin{equation}\label{varphi-int}
z\longmapsto (f(z):  g(z): h(z))
\end{equation}
on the complement of a finite set of $\Gamma$--orbits in $\mathfrak R_\Gamma$ of common zeroes of $f, g$ and $h$.
The image is an irreducible projective plane curve, which we denote by $\mathcal C(f, g, h)$ (see \cite[Lemma 3-1]{Muic2}).
We denote by 
$$
\mathbb C\left(\mathcal C(f, g, h)\right)
$$
the field of rational functions on $\mathcal C(f, g, h)$. It can be realized as a subfield of the field 
$\mathbb C\left(\mathfrak R_\Gamma\right)$ of rational functions on $\mathfrak R_\Gamma$ generated over $\mathbb C$ by
$g/f$ and $h/f$. By the 
usual definition, the degree of the map 
(\ref{varphi-int}), denoted by
$$ 
d(f, g, h),
$$ is the degree of the field extension 
$$
\mathbb C\left(\mathcal C(f, g, h)\right) \subset 
\mathbb C\left(\mathfrak R_\Gamma\right).
$$

Let $l$ be a line in $\mathbb P^{2}$ in general position with respect 
to $\mathcal C(f, g,  h)$. Then, it intersects $\mathcal C(f, g,h)$ in different points the number of 
which is the degree of $\mathcal C(f, g,h)$. We denote the degree of   $\mathcal C(f, g,h)$ by $\deg{\mathcal C(f, g, h)}$.
The main result of \cite{Muic2} (see \cite[Theorem 1-4]{Muic2}) proves the following:

\begin{Thm}\label{starithm}  Assume that  $\dim M_m(\Gamma, \chi)\ge 3$. Assume that
  $f, g, h\in M_m(\Gamma, \chi)$ are linearly independent.
Then, we have the following:
$$
d(f, g, h) \cdot \deg{\mathcal C(f, g, h)}= \eta_m(f, g, h),
$$
where
$$
\eta_m(f, g, h)=\frac{m}{4\pi} \iint _{\Gamma \backslash \mathbb H} \frac{dxdy}{y^2}
- \sum_{\mathfrak a\in \mathfrak R_\Gamma} 
\min{\left(\nu_{\mathfrak a}(f), \nu_{\mathfrak a}(g), \nu_{\mathfrak a}(h)\right)}.
$$ 
\end{Thm}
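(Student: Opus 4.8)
The plan is to read the asserted identity as a count, carried out in two ways, of the points of $\mathfrak R_\Gamma$ lying over a line in $\bbP^2$. Write $\varphi\colon\mathfrak R_\Gamma\to\bbP^2$ for the morphism which off the base locus is $z\mapsto(f(z):g(z):h(z))$; by \cite[Lemma 3-1]{Muic2} it factors as $\mathfrak R_\Gamma\xrightarrow{\psi}\mathcal C\hookrightarrow\bbP^2$, where the first map $\psi$ has degree $d:=d(f,g,h)$ onto the irreducible plane curve $\mathcal C=\mathcal C(f,g,h)$ of degree $D:=\deg\mathcal C(f,g,h)$, and the second is the inclusion $\iota$. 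Put $B=\sum_{\mathfrak a\in\mathfrak R_\Gamma}\min\bigl(\nu_{\mathfrak a}(f),\nu_{\mathfrak a}(g),\nu_{\mathfrak a}(h)\bigr)[\mathfrak a]$, so that $\deg B$ is the quantity subtracted in $\eta_m(f,g,h)$. I would first record the divisor identity
\[
\varphi^{*}\{aX+bY+cZ=0\}=\mathrm{div}(af+bg+ch)-B
\]
for every $(a:b:c)\in\bbP^2$: here $af+bg+ch\neq 0$ because $f,g,h$ are linearly independent, the ultrametric inequality gives $\mathrm{div}(af+bg+ch)\ge B$ always, and the identity at a point $\mathfrak a$ reduces to pulling $\{aX+bY+cZ=0\}$ back in the affine chart of $\bbP^2$ dual to a form among $f,g,h$ of minimal order at $\mathfrak a$, where the common factor recorded by $B$ is exactly the one divided away. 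Equivalently, $f,g,h$ are three sections of the line bundle $L_{m,\chi}$ of weight-$m$ forms for $(\Gamma,\chi)$ on $\mathfrak R_\Gamma$ with common zero divisor $B$, whence $\varphi^{*}\mathcal O_{\bbP^2}(1)\cong L_{m,\chi}(-B)$.

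For the first count I would use that $\varphi^{*}l=\psi^{*}(\iota^{*}l)$: the divisor $\iota^{*}l$ cut out on $\mathcal C$ by a line $l$ has degree $D$ (this is B\'ezout, and for $l$ in general position it is the reduced sum of the $D$ intersection points), while $\psi$ has degree $d$, so $\deg\varphi^{*}l=dD$ for every line $l$. For a generic $l$ one in fact gets that $\varphi^{*}l$ is a reduced sum of $dD$ distinct points — the $D$ intersection points being smooth points of $\mathcal C$ that lie below no branch point of the degree-$d$ cover $\mathfrak R_\Gamma\to\widetilde{\mathcal C}$ of the normalisation and below no base point of $\varphi$ — but only the plain degree $\deg\varphi^{*}l=dD$ is used below.

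For the second count the input I need is the valence formula: for every nonzero $F\in M_m(\Gamma,\chi)$,
\[
\deg\mathrm{div}(F)=\sum_{\mathfrak a\in\mathfrak R_\Gamma}\nu_{\mathfrak a}(F)=\frac{m}{4\pi}\iint_{\Gamma\backslash\bbH}\frac{dx\,dy}{y^{2}}.
\]
I would establish it by applying the argument principle to $F'/F$ along the boundary of a fundamental domain for $\Gamma$ in $\bbH$, excising small discs about the elliptic points and horoball neighbourhoods of the cusps, pairing the boundary arcs by means of the cocycle identity \eqref{cocycle}, and evaluating the excised contributions in the limit; alternatively it is Gauss--Bonnet applied to $\deg L_{m,\chi}=m\deg\omega_\Gamma=\tfrac{m}{4\pi}\mathrm{vol}(\Gamma\backslash\bbH)$, the $\chi$-twist being of finite order and hence of degree $0$. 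Combining the first count with the divisor identity of the first paragraph, for any line $l$ one obtains $dD=\deg\varphi^{*}l=\deg\mathrm{div}(af+bg+ch)-\deg B$; the valence formula then turns the right-hand side into $\tfrac{m}{4\pi}\iint_{\Gamma\backslash\bbH}\tfrac{dx\,dy}{y^{2}}-\deg B=\eta_m(f,g,h)$, which is the asserted identity.

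The main obstacle is the local bookkeeping at the elliptic points and the cusps. I would need to fix the normalisation of $\nu_{\mathfrak a}(F)$ on the compact surface $\mathfrak R_\Gamma$ — the passage from the vanishing order of $F$ in the coordinate $z$ on $\bbH$ to the order in a local coordinate on the quotient brings in the ramification index of an elliptic point and the width of a cusp — verify that with this normalisation the valence formula carries precisely the constant $m/4\pi$ in front of the volume, and check that the divisor identity $\varphi^{*}l=\mathrm{div}(af+bg+ch)-B$ is undisturbed at the elliptic points, the cusps and the support of $B$: in particular that $\mathrm{div}(af+bg+ch)-B$ is an honest effective integer divisor there, the fractional contributions at an elliptic point cancelling because $\nu_{\mathfrak a}(f)$, $\nu_{\mathfrak a}(g)$, $\nu_{\mathfrak a}(h)$ and $\nu_{\mathfrak a}(af+bg+ch)$ all lie in a single residue class modulo $\bbZ$ determined by $m$, $\chi$ and $\mathfrak a$. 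The remaining steps are the routine dictionary between the degree of a nonconstant map of curves, the degree of a plane curve, and the degree of a divisor on $\mathfrak R_\Gamma$.
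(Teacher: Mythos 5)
This theorem is not proved in the present paper: it is quoted verbatim from \cite[Theorem 1-4]{Muic2}, so there is no in-text proof to compare against. Your two-way count of $\deg\varphi^{*}l$ --- the identity $\varphi^{*}l=\mathrm{div}(af+bg+ch)-B$ computed in the chart dual to a form of minimal order, $\deg\varphi^{*}l=d(f,g,h)\cdot\deg\mathcal C(f,g,h)$ via the factorization through $\mathcal C$, and the valence formula $\deg\mathrm{div}(F)=\tfrac{m}{4\pi}\iint_{\Gamma\backslash\mathbb H}\tfrac{dx\,dy}{y^{2}}$ --- is the correct and standard argument, and is essentially the strategy of the cited proof; the local normalization issues at elliptic points and cusps that you flag are real but routine and are handled exactly as you describe.
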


Now, we discuss the main result of the present paper. We introduce some notation.  Let $\Xi\subset M_m(\Gamma, \chi)$ be a subspace such that $\dim{\Xi}\ge 3$.
Assume that   $f, g \in \Xi$ are linearly independent.
For each $l\ge 1$ (resp., $k\ge 1$), we let $X_l$ (resp., $Z_k$) be the set of all
$h\in \Xi - \left(\mathbb C f+ \mathbb C g\right)$ such that $\deg{\mathcal C(f, g, h)}=l$ (resp., $d(f, g, h)=k$). By Theorem \ref{starithm}, we have
$$
\text{$X_l$ and $Z_k$ are  empty for $k, l>  \frac{m}{4\pi} \iint _{\Gamma \backslash \mathbb H} \frac{dxdy}{y^2}$}.
$$
Obviously, we have
$$
\Xi- \left(\mathbb C f+ \mathbb C g\right)=\cup_{l\ge 1}\ X_l=  \cup_{k\ge 1} \ Z_k.
$$
We remark that  $X_1=\emptyset$ since $f, g$, and $h$ are linearly independent.
The main result of the present paper is the following theorem:

\begin{Thm}\label{mainthm}  Let $\Xi\subset M_m(\Gamma, \chi)$ be a subspace such that $\dim{\Xi}\ge 3$. We equip $\Xi$ with Zariski topology. 
Assume that   $f, g \in \Xi$ are linearly independent.  Then,  we have the following:
  \begin{itemize}
  \item[(i)]   The sets $X_l$ are locally closed. The set of all $h\in \Xi- \left(\mathbb C f+ \mathbb C g\right)$ such that
    $\deg{\mathcal C(f, g, h)}$ is largest possible is open (i.e., the largest $L$ such that $X_L\neq \emptyset$).
  \item[(ii)]  For each $l$ such that $X_l\neq \emptyset$, the set of all $h\in X_l$ such that $d(f, g, h)$ is smallest possible in $X_l$ is  an open set in $X_l$.
  \item[(iii)] The sets $Z_k$ are constructible. The set  $Z_k$ is empty set unless $k$ divides
  $\left[\mathbb C(\mathfrak R_\Gamma): \mathbb C(g/f)\right]$. 
  \end{itemize}
  \end{Thm}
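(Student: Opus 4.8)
My plan is to work throughout on the Zariski-open subset $\Xi':=\Xi-(\mathbb{C}f+\mathbb{C}g)$ of $\Xi$ — on which $f,g,h$ are automatically linearly independent, so that Theorem \ref{starithm} applies — and to reduce everything to two ``semicontinuity'' statements: that $h\mapsto\eta_m(f,g,h)$ has Zariski-open super-level sets, and that $h\mapsto d(f,g,h)$ has Zariski-open sub-level sets. Once these are in hand, parts (i)--(iii) will fall out of the identity $d(f,g,h)\cdot\deg\mathcal{C}(f,g,h)=\eta_m(f,g,h)$ of Theorem \ref{starithm}. At the outset I fix the constant $c_m:=\frac{m}{4\pi}\iint_{\Gamma\backslash\mathbb{H}}y^{-2}\,dx\,dy$, the finite set $\{\mathfrak{a}_1,\dots,\mathfrak{a}_r\}$ of common zeros of the fixed forms $f$ and $g$ on $\mathfrak{R}_\Gamma$, and the numbers $e_i:=\min(\nu_{\mathfrak{a}_i}(f),\nu_{\mathfrak{a}_i}(g))>0$; since $f,g$ are fixed and $h$ is holomorphic, the sum in $\eta_m$ collapses to $\eta_m(f,g,h)=c_m-\sum_{i=1}^r\min(e_i,\nu_{\mathfrak{a}_i}(h))$ for all $h\in\Xi'$.

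First I would prove that $\{h\in\Xi':\eta_m(f,g,h)\ge c\}$ is Zariski-open for every $c$. The input is that, for a fixed point $\mathfrak{a}\in\mathfrak{R}_\Gamma$ and a threshold $t$, the condition $\nu_{\mathfrak{a}}(h)\ge t$ is the vanishing of finitely many coefficients of a local expansion of $h$, hence cuts out a $\mathbb{C}$-linear (so Zariski-closed) subspace of $M_m(\Gamma,\chi)$. Then for each value $v$ the set $\{h:\sum_i\min(e_i,\nu_{\mathfrak{a}_i}(h))\ge v\}$ is a finite union of finite intersections of such closed subspaces, hence closed, and passing to complements yields the claim; in particular every level set of $\eta_m$ on $\Xi'$ is locally closed.

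Next I would establish that $\{h\in\Xi':d(f,g,h)\le k\}$ is Zariski-open for every $k$ and that $d(f,g,h)$ divides $D_0:=[\mathbb{C}(\mathfrak{R}_\Gamma):\mathbb{C}(g/f)]$. Put $w:=g/f$, which is non-constant, so $D_0<\infty$ and, the base field being of characteristic $0$, there are exactly $D_0$ distinct $\mathbb{C}(w)$-embeddings $\sigma_1,\dots,\sigma_{D_0}\colon\mathbb{C}(\mathfrak{R}_\Gamma)\hookrightarrow\overline{\mathbb{C}(w)}$. For $h\in\Xi'$ set $u_h:=h/f$; since $\mathbb{C}(\mathcal{C}(f,g,h))=\mathbb{C}(w,u_h)$, multiplicativity of degree in the tower $\mathbb{C}(w)\subseteq\mathbb{C}(w,u_h)\subseteq\mathbb{C}(\mathfrak{R}_\Gamma)$ gives $d(f,g,h)=D_0/e(h)$, where $e(h):=[\mathbb{C}(w,u_h):\mathbb{C}(w)]$ equals, by separability, the number of distinct elements of $\{\sigma_1(u_h),\dots,\sigma_{D_0}(u_h)\}$; this already yields the divisibility assertion in (iii). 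Now the crucial point: for $j\ne j'$ the locus $E_{jj'}:=\{h\in\Xi:\sigma_j(u_h)=\sigma_{j'}(u_h)\}$ is the kernel of the $\mathbb{C}$-linear map $h\mapsto\sigma_j(h/f)-\sigma_{j'}(h/f)$ from $\Xi$ into the $\mathbb{C}$-vector space $\overline{\mathbb{C}(w)}$, hence is a $\mathbb{C}$-linear, Zariski-closed subset of $\Xi$ — even though its defining equation has coefficients in a transcendental extension of $\mathbb{C}$. Therefore $\{h:e(h)\ge\mu\}=\bigcup_{|J|=\mu}\bigcap_{\{j,j'\}\subseteq J}(\Xi\setminus E_{jj'})$ is open for every $\mu$, so each $\{h:e(h)=\mu\}$, and hence each $Z_k=\{h\in\Xi':e(h)=D_0/k\}$ (empty unless $k\mid D_0$), is locally closed — a fortiori constructible — which completes (iii); equivalently $\{h\in\Xi':d(f,g,h)\le k\}$ is open.

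Finally I would combine the two: for $h_0\in\Xi'$, the set $U:=\{h:\eta_m(f,g,h)\ge\eta_m(f,g,h_0)\}\cap\{h:d(f,g,h)\le d(f,g,h_0)\}$ is an open neighbourhood of $h_0$, and on $U$ one has $\deg\mathcal{C}(f,g,h)=\eta_m(f,g,h)/d(f,g,h)\ge\eta_m(f,g,h_0)/d(f,g,h_0)=\deg\mathcal{C}(f,g,h_0)$ by Theorem \ref{starithm}, so $\{h\in\Xi':\deg\mathcal{C}(f,g,h)\ge l\}$ is open for every $l$. Consequently each $X_l$, being the difference of two open sets, is locally closed, and for the largest $L$ with $X_L\ne\emptyset$ the set $X_L=\{h\in\Xi':\deg\mathcal{C}(f,g,h)\ge L\}$ is open, which is (i). For (ii), if $X_l\ne\emptyset$ and $k_0=\min\{d(f,g,h):h\in X_l\}$, then $\{h\in X_l:d(f,g,h)=k_0\}=X_l\cap\{h\in\Xi':d(f,g,h)\le k_0\}$ is the intersection of $X_l$ with a Zariski-open subset of $\Xi'$, hence open in $X_l$. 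I expect the only genuinely non-formal step to be the one about $d(f,g,h)$ — in particular the fact that the coincidence loci $E_{jj'}$, although defined by linear equations with coefficients in $\overline{\mathbb{C}(g/f)}$ rather than in $\mathbb{C}$, are nevertheless closed subvarieties of $\Xi$ over $\mathbb{C}$, so that the quantity genuinely controlling $d(f,g,h)$ is the number of distinct conjugates of $h/f$ over $\mathbb{C}(g/f)$; the remaining manipulations are routine bookkeeping with Theorem \ref{starithm} and with finite unions and intersections of linear subspaces.
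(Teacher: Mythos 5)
Your proposal is correct, but it proves the theorem by a genuinely different route than the paper. The paper fixes a basis $f, g, f_2,\dots,f_{s-1}$ of $\Xi$, writes $h$ in coordinates $\lambda_0,\dots,\lambda_{s-1}$, and encodes the condition that a degree-$l$ homogeneous polynomial vanish on $\mathcal C(f,g,h)$ as a linear system whose coefficients are the $q$-expansion coefficients of the monomials $f^{\alpha_0+i_0}g^{\alpha_1+i_1}f_2^{i_2}\cdots$, hence polynomials in $\lambda$; the degrees of $P_{f,g,h}$ and of the minimal polynomial $Q_{f,g,h}(g/f,\cdot)$ of $h/f$ over $\mathbb C(g/f)$ are then read off from rank conditions, i.e.\ from vanishing and non-vanishing of explicit minors, and $d(f,g,h)$ is converted into $\deg Q_{f,g,h}(g/f,\cdot)$ via the same tower $\mathbb C(g/f)\subset\mathbb C(\mathcal C(f,g,h))\subset\mathbb C(\mathfrak R_\Gamma)$ that you use (Lemma \ref{l-1}). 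You replace all of this by two semicontinuity statements: openness of the super-level sets of $\eta_m$, reduced via Theorem \ref{starithm}'s formula to linear vanishing-order conditions at the finitely many common zeros of $f$ and $g$; and openness of the sub-level sets of $d$, obtained from the observation that the coincidence loci $\{\sigma_j(h/f)=\sigma_{j'}(h/f)\}$ of the $\mathbb C(g/f)$-embeddings of $\mathbb C(\mathfrak R_\Gamma)$ are kernels of $\mathbb C$-linear maps, hence Zariski-closed linear subspaces of $\Xi$ even though their defining equations live in $\overline{\mathbb C(g/f)}$. Combining the two via $d\cdot\deg\mathcal C=\eta_m$ then gives (i) and (ii) cleanly. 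What the paper's approach buys is explicitness — the minors are concrete polynomials in the coordinates of $h$, computable from Fourier coefficients, in keeping with the computational aims of the companion papers — whereas your argument is coordinate-free and in fact delivers slightly more: $\{h: d(f,g,h)\le k\}$ is open and each $Z_k$ is locally closed, not merely constructible. Both arguments agree on the divisibility statement in (iii), which in each case is just multiplicativity of degrees in the field tower.
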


\vskip .2in
Recall that  we say that  $\Xi$ {\bf determines  the field of rational functions}  $\mathbb C(\mathfrak R_\Gamma)$ if  
there exists  a basis  $f_0, \ldots, f_{s-1}$ of $W$,  such that $\mathbb C(\mathfrak R_\Gamma)$ is generated over $\mathbb C$ by the
quotients $f_i/f_0$, $1\le i\le s-1$  (see \cite[Definition 1.3]{MuKo}). The notion is independent of the basis. The introduction of \cite{MuKo}
contains many examples of such spaces $\Xi$ (called $W$ there). For example, we may take $\Xi=S_2(\Gamma)$ if $\Gamma$ is not hyperelliptic (\cite{Ogg} has determined  all $\Gamma_0(N)$
such that $X_0(N)$ is not hyperelliptic (implies $g(\Gamma_0(N))\ge 3$)). Also,  for $m\ge 4$ is even,  if $\dim S_m(\Gamma) \ge \max{(g(\Gamma)+2, 3)}$, then we can take $\Xi=S_m(\Gamma)$
by general theory of algebraic curves \cite[Corollary 3.4]{Muic1}.

\begin{Cor}\label{maincor}  Maintaining assumptions of Theorem \ref{mainthm}, we assume also that  $\Xi$  determines  the field of rational functions  $\mathbb C(\mathfrak R_\Gamma)$. Let
  $L$ be the largest possible such that $X_L\neq 0$. Then, $Z_1\cap X_L$ contains a non--empty open set. In other words, the set of all $h\in \Xi- \left(\mathbb C f+ \mathbb C g\right)$ such that
    $\deg{\mathcal C(f, g, h)}$ is largest possible and $d(f, g, h)=1$ is non--empty open set. 
\end{Cor}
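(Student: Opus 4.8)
The plan is to prove the sharper statement that, under the extra hypothesis, $Z_1$ is already a non-empty Zariski-open subset of $\Xi$. Granting this, the corollary is immediate: by Theorem \ref{mainthm}(i) the set $X_L$ is non-empty and open, $\Xi$ is an affine space and hence irreducible, so $Z_1\cap X_L$ — an intersection of two non-empty open subsets of an irreducible space — is non-empty and open. (Alternatively, staying closer to Theorem \ref{mainthm}: once we know that some $h\in X_L$ has $d(f,g,h)=1$, the value $1$ is the least value taken by $d(f,g,\cdot)$ on $X_L$, and by Theorem \ref{mainthm}(ii) the locus in $X_L$ where this least value is attained — which is precisely $Z_1\cap X_L$ — is open in $X_L$.)

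To prove that $Z_1$ is open and non-empty, recall from the introduction that $\mathbb C(\mathcal C(f,g,h))=\mathbb C(g/f,\,h/f)$, so that $d(f,g,h)=[\mathbb C(\mathfrak R_\Gamma):\mathbb C(g/f,h/f)]$ for $h\in\Xi-(\mathbb C f+\mathbb C g)$, and $\mathbb C(g/f,h/f)$ is an intermediate field of $\mathbb C(\mathfrak R_\Gamma)/\mathbb C(g/f)$. Since $f,g$ are linearly independent, $g/f\notin\mathbb C$, so $\mathbb C(g/f)$ has transcendence degree $1$ over $\mathbb C$; as $\mathbb C(\mathfrak R_\Gamma)$ is a finitely generated field of transcendence degree $1$ over $\mathbb C$, the extension $\mathbb C(\mathfrak R_\Gamma)/\mathbb C(g/f)$ is finite, and separable because $\mathrm{char}\,\mathbb C=0$. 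Hence it has only finitely many intermediate fields; let $F_1,\dots,F_r$ be those with $\mathbb C(g/f)\subseteq F_j\subsetneq\mathbb C(\mathfrak R_\Gamma)$.

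The map $\varphi\mapsto\varphi/f$ is $\mathbb C$-linear from $\Xi$ into $\mathbb C(\mathfrak R_\Gamma)$, so each $V_j:=\{\varphi\in\Xi:\varphi/f\in F_j\}$ is a linear subspace of $\Xi$; moreover $V_j$ is proper, since $V_j=\Xi$ would force $F_j$ to contain every quotient $\varphi/f$ with $\varphi\in\Xi$, hence (extending $f$ to a basis of $\Xi$ with first member $f$ and invoking the basis-independence of the notion in the definition) to contain $\mathbb C(\mathfrak R_\Gamma)$, contradicting $F_j\subsetneq\mathbb C(\mathfrak R_\Gamma)$. For $h\in\Xi-(\mathbb C f+\mathbb C g)$ one has $d(f,g,h)=1$ exactly when $\mathbb C(g/f,h/f)=\mathbb C(\mathfrak R_\Gamma)$, i.e. exactly when $h/f$ lies in no $F_j$, i.e. exactly when $h\notin\bigcup_{j}V_j$. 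Therefore
\[
Z_1=\Xi-\Big((\mathbb C f+\mathbb C g)\cup\bigcup_{j=1}^{r}V_j\Big),
\]
the complement of a finite union of proper linear subspaces of $\Xi$ ($\mathbb C f+\mathbb C g$ is proper because $\dim\Xi\ge 3$). Over the infinite field $\mathbb C$ a vector space is never a finite union of proper subspaces, so this complement — hence $Z_1$ — is open and non-empty. (In particular this refines Theorem \ref{mainthm}(iii) in the case $k=1$.)

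The only step with genuine content is the finiteness of the set of intermediate fields of $\mathbb C(\mathfrak R_\Gamma)/\mathbb C(g/f)$, together with the remark that the conditions ``$h/f\in F_j$'' cut out linear subspaces of $\Xi$; the rest (irreducibility of affine space, the finite-union-of-subspaces fact) is soft. I do not expect a real obstacle, only a minor bookkeeping point in the degenerate case $\mathbb C(g/f)=\mathbb C(\mathfrak R_\Gamma)$, where there are no proper $F_j$ and the displayed identity simply becomes $Z_1=\Xi-(\mathbb C f+\mathbb C g)$.
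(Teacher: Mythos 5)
Your proof is correct, and its overall skeleton coincides with the paper's: establish that $Z_1$ contains a non-empty open subset of $\Xi$, note that $X_L$ is non-empty and open (Theorem \ref{mainthm}(i)), and intersect the two inside the irreducible affine space $\Xi$. The difference is in how the $Z_1$ step is handled. The paper simply cites \cite[Theorem 1.4]{MuKo} for the existence of a non-empty open subset of $Z_1$, whereas you prove it from scratch by a primitive-element-theorem argument: the extension $\mathbb C(\mathfrak R_\Gamma)/\mathbb C(g/f)$ is finite and separable, hence has finitely many intermediate fields $F_j$, each condition $h/f\in F_j$ cuts out a proper linear subspace of $\Xi$ (properness using that $\Xi$ determines $\mathbb C(\mathfrak R_\Gamma)$), and a vector space over $\mathbb C$ is not a finite union of proper subspaces. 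This buys you a self-contained proof and in fact a sharper conclusion --- $Z_1$ is itself open, being the complement in $\Xi-(\mathbb C f+\mathbb C g)$ of a finite union of linear subspaces --- which refines Theorem \ref{mainthm}(iii) for $k=1$; the cost is that you are essentially re-deriving the content of the cited result of \cite{MuKo} (whose proof is of the same primitive-element flavour), so the argument is longer than the paper's two-line deduction. Your parenthetical alternative ending, via Theorem \ref{mainthm}(ii) once $Z_1\cap X_L\neq\emptyset$ is known, is also valid and stays closest to the machinery of the present paper. All the individual steps check out, including the treatment of the degenerate case $\mathbb C(g/f)=\mathbb C(\mathfrak R_\Gamma)$ and the use of basis-independence in the definition of ``determines the field of rational functions.''
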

\begin{proof} By \cite[Theorem 1.4]{MuKo}, $Z_1$ contains a non--empty open set. On the other hand, by Theorem \ref{mainthm}, $X_L$ is open. The corollary follows. 
  \end{proof}

The corollary improves on  \cite[Theorem 1.4]{MuKo} since in the language of the present paper there we could just construct open set in $\Xi- \left(\mathbb C f+ \mathbb C g\right)$
such that $d(f, g, h)=1$ without control of  $\deg{\mathcal C(f, g, h)}$. The corollary also generalizes   \cite[Corollary 3.7]{MuKo}
with a similar conclusion but with more restrictive assumptions.

\section{The Proof of Theorem \ref{mainthm}}
Let  $h\in \Xi- \left(\mathbb C f+ \mathbb C g\right)$. Let $P_{f, g, h}$ be an irreducible homogeneous polynomial which locus
is $C(f, g, h)$. Equivalently,   $P_{f, g, h}(f(z), g(z), h(z))=0$, for all $z\in \mathbb H$.  The polynomial $P_{f, g, h}$ is unique up to a multiplication by a non–zero constant.
The dehomogenization $Q_{f, g, h}$ of $P_{f, g, h}$ with respect to the last variable satisifies $Q_{f, g, h}(g/f, h/f)=0$ in the field of rational functions $\mathbb C(\mathfrak R_\Gamma)$.
It is very easy to check that $Q_{f, g, h}(g/f, \cdot)$ is irreducible as a polynomial with coefficients in the field  $\mathbb C(g/f)$. Thus, it is a minimal polynomial of $h/f$ over  $\mathbb C(g/f)$.
Hence, it is equal to the degree of the   field extension $\mathbb C(g/f)\subset \mathbb C(\mathcal C(f, g, h))$
$$
\left[\mathcal C(f, g, h): \mathbb C(g/f)\right] = \deg{Q_{f, g, h}(g/f, \cdot)}.
$$
If we consider the field extensions
$$
\mathbb C(g/f)\subset \mathbb C(\mathcal C(f, g, h)) \subset \mathbb C(\mathfrak R_\Gamma),
$$
and compute their degrees, then we obtain the next lemma.

\begin{Lem}\label{l-1} For $h\in \Xi- \left(\mathbb C f+ \mathbb C g\right)$, the product  $\deg{Q_{f, g, h}(g/f, \cdot)}\cdot d(f, g, h)$ does not depend on $h$. It is equal to
  the degree $\left[\mathbb C(\mathfrak R_\Gamma): \mathbb C(g/f)\right]$ (i.e, to the degree of divisor of poles of $g/f$). In particular, $Z_k$ is empty set unless $k$ divides
  $\left[\mathbb C(\mathfrak R_\Gamma): \mathbb C(g/f)\right]$. 
   \end{Lem}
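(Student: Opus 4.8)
The plan is to read the lemma off from multiplicativity of degrees in the tower of function fields
\[
\mathbb{C}(g/f) \subset \mathbb{C}(\mathcal{C}(f, g, h)) \subset \mathbb{C}(\mathfrak{R}_\Gamma)
\]
displayed just before the statement. First I would invoke the two identifications already established: by the paragraph preceding the lemma, $Q_{f,g,h}(g/f,\cdot)$ is the minimal polynomial of $h/f$ over $\mathbb{C}(g/f)$, so $[\mathbb{C}(\mathcal{C}(f,g,h)):\mathbb{C}(g/f)] = \deg Q_{f,g,h}(g/f,\cdot)$; and by the definition of the degree of the map (\ref{varphi-int}) recalled in the introduction, $[\mathbb{C}(\mathfrak{R}_\Gamma):\mathbb{C}(\mathcal{C}(f,g,h))] = d(f,g,h)$. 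Multiplying these along the tower gives
\[
\deg Q_{f,g,h}(g/f,\cdot)\cdot d(f,g,h) = [\mathbb{C}(\mathfrak{R}_\Gamma):\mathbb{C}(g/f)],
\]
whose right-hand side involves only $f$ and $g$; this proves that the product is independent of $h$.

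Next I would identify $[\mathbb{C}(\mathfrak{R}_\Gamma):\mathbb{C}(g/f)]$ with the degree of the divisor of poles of $g/f$. Since $f$ and $g$ are linearly independent, $g/f$ is a non-constant meromorphic function on $\mathfrak{R}_\Gamma$, hence defines a finite morphism $g/f\colon \mathfrak{R}_\Gamma \to \mathbb{P}^1$; by the standard theory of smooth projective curves its degree --- equivalently $[\mathbb{C}(\mathfrak{R}_\Gamma):\mathbb{C}(g/f)]$ --- equals the degree of the fibre $(g/f)^{-1}(\infty)$, that is, the degree of the divisor of poles of $g/f$. This gives the remaining part of the first assertion.

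Finally, the ``in particular'' clause is immediate from the displayed formula: if $h\in Z_k$ then $k=d(f,g,h)$ is one of the two nonnegative integer factors of the fixed integer $[\mathbb{C}(\mathfrak{R}_\Gamma):\mathbb{C}(g/f)]$, hence $k$ divides it; so whenever $k\nmid[\mathbb{C}(\mathfrak{R}_\Gamma):\mathbb{C}(g/f)]$ there is no such $h$, i.e.\ $Z_k=\emptyset$.

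I do not expect a genuine obstacle here: the whole argument is the tower law together with the textbook identification of the degree of a morphism to $\mathbb{P}^1$ with the degree of a fibre. The only points needing a line of care are checking that the two factor-degrees are set up exactly as in the preamble so that the product telescopes correctly, and recording that linear independence of $f$ and $g$ is precisely what guarantees $g/f$ is non-constant, so that the morphism to $\mathbb{P}^1$ is finite and its degree is a well-defined positive integer.
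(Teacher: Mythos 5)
Your proposal is correct and follows essentially the same route as the paper: the lemma is exactly the tower law applied to $\mathbb{C}(g/f)\subset \mathbb{C}(\mathcal{C}(f,g,h))\subset \mathbb{C}(\mathfrak{R}_\Gamma)$, using that $Q_{f,g,h}(g/f,\cdot)$ is the minimal polynomial of $h/f$ over $\mathbb{C}(g/f)$ and that $d(f,g,h)$ is by definition the degree of the upper extension. The paper leaves the proof as the paragraph preceding the statement, and your write-up fills in the same steps, including the standard identification of $\left[\mathbb{C}(\mathfrak{R}_\Gamma):\mathbb{C}(g/f)\right]$ with the degree of the polar divisor of $g/f$.
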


We continue with the following two lemmas:

\begin{Lem}\label{l-2}  For each $k\ge 1$,   let $Y_k$ be  the set of all $h\in  \Xi- \left(\mathbb C f+ \mathbb C g\right)$  such that $\deg{Q_{f, g, h}(g/f, \cdot)}=k$.
  Then, $Y_k$ is empty unless $k$ divides
  $\left[\mathbb C(\mathfrak R_\Gamma): \mathbb C(g/f)\right]$. If this is so, we have
  $Z_k=Y_{\left[\mathbb C(\mathfrak R_\Gamma): \mathbb C(g/f)\right]/k}$.
\end{Lem}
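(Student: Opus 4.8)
The plan is to deduce the entire statement from Lemma \ref{l-1}, which already does all the work. Write $D:=\left[\mathbb C(\mathfrak R_\Gamma): \mathbb C(g/f)\right]$; by Lemma \ref{l-1} this positive integer does not depend on $h$, and for every $h\in\Xi-\left(\mathbb C f+\mathbb C g\right)$ one has the factorization
$$
\deg{Q_{f, g, h}(g/f, \cdot)}\cdot d(f, g, h)=D,
$$
both factors on the left being positive integers (the second is the degree of the finite field extension $\mathbb C(\mathcal C(f, g, h))\subset \mathbb C(\mathfrak R_\Gamma)$, the first the degree of the corresponding minimal polynomial). So this identity by itself controls everything.

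First I would settle the emptiness assertion: if $h\in Y_k$, then $\deg{Q_{f, g, h}(g/f, \cdot)}=k$, so the displayed identity reads $k\cdot d(f, g, h)=D$ and exhibits $k$ as a divisor of $D$; hence $Y_k=\emptyset$ whenever $k\nmid D$. Next, assuming $k\mid D$ (so that $D/k$ is again a positive integer), I would prove $Z_k=Y_{D/k}$ by the two obvious inclusions. If $h\in Z_k$, i.e. $d(f, g, h)=k$, the identity forces $\deg{Q_{f, g, h}(g/f, \cdot)}=D/k$, so $h\in Y_{D/k}$; conversely, if $h\in Y_{D/k}$, then $\deg{Q_{f, g, h}(g/f, \cdot)}=D/k$ and the same identity forces $d(f, g, h)=k$, so $h\in Z_k$. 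This yields the claimed equality.

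I do not expect any genuine obstacle here: all the substance is contained in Lemma \ref{l-1} (and ultimately in Theorem \ref{starithm} together with the tower $\mathbb C(g/f)\subset\mathbb C(\mathcal C(f, g, h))\subset\mathbb C(\mathfrak R_\Gamma)$ discussed just before Lemma \ref{l-1}), and Lemma \ref{l-2} amounts to nothing more than rewriting the product $\deg{Q_{f, g, h}(g/f, \cdot)}\cdot d(f, g, h)=D$ in terms of the complementary divisor pair $(k, D/k)$. The only point deserving an explicit word is the harmless observation that $D/k\in\mathbb Z_{>0}$ under the hypothesis $k\mid D$.
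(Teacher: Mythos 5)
Your proposal is correct and follows exactly the paper's route: the paper proves Lemma \ref{l-2} by the single sentence ``This follows immediately from Lemma \ref{l-1},'' and your argument simply spells out that deduction from the identity $\deg{Q_{f, g, h}(g/f, \cdot)}\cdot d(f, g, h)=\left[\mathbb C(\mathfrak R_\Gamma): \mathbb C(g/f)\right]$. Nothing is missing.
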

\begin{proof}  This follows immediately from Lemma \ref{l-1}. 
\end{proof}

\begin{Lem}\label{l-3}  For each $l\ge 1$,   $X_l$ is  the set of all $h\in  \Xi- \left(\mathbb C f+ \mathbb C g\right)$  such that $\deg{P_{f, g, h}}=l$
\end{Lem}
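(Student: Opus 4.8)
\textbf{Proof proposal for Lemma \ref{l-3}.}

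The plan is to compare the polynomial $P_{f,g,h}$, which cuts out the plane curve $\mathcal C(f,g,h)$, with the number $\deg \mathcal C(f,g,h)$ computed via intersection with a general line. Recall from the discussion preceding Lemma \ref{l-1} that $P_{f,g,h}$ is the (essentially unique) irreducible homogeneous polynomial vanishing identically on the image of $z \mapsto (f(z):g(z):h(z))$, so its zero locus in $\mathbb P^2$ is exactly $\mathcal C(f,g,h)$. First I would observe that for an irreducible plane curve, the degree of a defining irreducible homogeneous polynomial equals the intersection number with a line in general position: a general line $l$ meets the curve transversally in $\deg P_{f,g,h}$ distinct points, because substituting a parametrization of $l$ into $P_{f,g,h}$ gives a single-variable polynomial of degree $\deg P_{f,g,h}$ whose roots are simple for generic $l$ (no root is a singular point of the curve, and $l$ is not tangent). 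Hence $\deg P_{f,g,h} = \deg \mathcal C(f,g,h)$.

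Granting that identity, the statement is immediate: by definition $X_l$ consists of those $h \in \Xi - (\mathbb C f + \mathbb C g)$ with $\deg \mathcal C(f,g,h) = l$, and we have just identified this with the condition $\deg P_{f,g,h} = l$. One should also note that $h$ ranging over $\Xi - (\mathbb C f + \mathbb C g)$ guarantees $f, g, h$ are linearly independent, so that the map \eqref{varphi-int} and the curve $\mathcal C(f,g,h)$ are genuinely defined (as in \cite[Lemma 3-1]{Muic2}), and $P_{f,g,h}$ exists and is well defined up to scalar.

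The only point requiring a little care — and the one I expect to be the crux — is the classical fact that a general line meets an irreducible plane curve of degree $d$ in exactly $d$ distinct points; this is where one uses that the singular locus of an irreducible plane curve is finite and that the tangent lines at smooth points do not cover an open set of the dual plane generically through a chosen pencil, so a generic line avoids all singularities and all inflectional/tangent behavior. This is standard algebraic geometry (Bézout plus genericity), so in the write-up I would simply cite it or dispatch it in a sentence. Everything else is bookkeeping: matching the two descriptions of $X_l$ and recording that $\deg P_{f,g,h} = \deg \mathcal C(f,g,h)$.
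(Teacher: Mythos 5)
Your argument is exactly the paper's: the proof there consists of the single observation that $\deg{\mathcal C(f, g, h)}= \deg{P_{f, g, h}}$ (cited as well known), after which the claim is the definition of $X_l$. You merely spell out the standard Bézout/genericity fact behind that identity, which is fine.
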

\begin{proof}  It is well--known and easy to check directly that  $\deg{\mathcal C(f, g, h)}= \deg{P_{f, g, h}}$.
  \end{proof}

\begin{Lem}\label{l-4}  For  all $h\in  \Xi- \left(\mathbb C f+ \mathbb C g\right)$, we have $\deg{Q_{f, g, h}(g/f, \cdot)} \le \deg{P_{f, g, h}}\le \eta_m(f, g, h)\le
  \frac{m}{4\pi} \iint _{\Gamma \backslash \mathbb H} \frac{dxdy}{y^2}$.
  \end{Lem}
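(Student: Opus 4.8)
The plan is to prove the chain of four inequalities link by link; each link is short, and the substantive input --- Theorem \ref{starithm} and the identifications recorded in Lemmas \ref{l-1}--\ref{l-3} --- is already available.

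For the leftmost inequality $\deg Q_{f,g,h}(g/f,\cdot)\le\deg P_{f,g,h}$ I would argue purely with polynomials. Put $d=\deg P_{f,g,h}$ and write $P_{f,g,h}=\sum_{i+j+k=d}a_{ijk}X^iY^jZ^k$; then its dehomogenization with respect to the last variable is $Q_{f,g,h}(u,v)=\sum_{i+j\le d}a_{i,j,d-i-j}\,u^iv^j$, whose degree in the variable $v$ is at most $d$. Substituting $u\mapsto g/f\in\mathbb C(\mathfrak R_\Gamma)$ can only annihilate coefficients of powers of $v$, never raise a degree, so $\deg Q_{f,g,h}(g/f,\cdot)\le d=\deg P_{f,g,h}$; and $Q_{f,g,h}(g/f,\cdot)$ is nonzero, being --- as recalled just before Lemma \ref{l-1} --- the minimal polynomial of $h/f$ over $\mathbb C(g/f)$.

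The middle inequality $\deg P_{f,g,h}\le\eta_m(f,g,h)$ follows at once by combining Lemma \ref{l-3}, which gives $\deg P_{f,g,h}=\deg\mathcal C(f,g,h)$, with Theorem \ref{starithm} (applicable since $f,g,h$ are linearly independent and $\dim M_m(\Gamma,\chi)\ge\dim\Xi\ge 3$), which gives $\deg\mathcal C(f,g,h)\cdot d(f,g,h)=\eta_m(f,g,h)$: as $d(f,g,h)$ is a positive integer, $\deg P_{f,g,h}\le\eta_m(f,g,h)$. For the rightmost inequality I would use the explicit formula for $\eta_m(f,g,h)$ from Theorem \ref{starithm}, namely $\frac{m}{4\pi}\iint_{\Gamma\backslash\mathbb H}\frac{dxdy}{y^2}-\sum_{\mathfrak a\in\mathfrak R_\Gamma}\min(\nu_{\mathfrak a}(f),\nu_{\mathfrak a}(g),\nu_{\mathfrak a}(h))$; since $f,g,h$ are holomorphic modular forms, $\nu_{\mathfrak a}(f),\nu_{\mathfrak a}(g),\nu_{\mathfrak a}(h)\ge 0$ for every $\mathfrak a\in\mathfrak R_\Gamma$, so each summand and hence the whole (finitely supported) sum is nonnegative, and subtracting it cannot increase the value. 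Concatenating the three links gives the asserted chain. I do not expect a genuine obstacle here; the only point demanding a moment's care is the passage from $P_{f,g,h}$ to the specialized univariate polynomial $Q_{f,g,h}(g/f,\cdot)$, and that is handled by the monomial bookkeeping above.
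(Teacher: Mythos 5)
Your proof is correct and follows the same route as the paper: the paper's own argument is a one-line appeal to Theorem \ref{starithm} together with $\deg\mathcal C(f,g,h)=\deg P_{f,g,h}$, and you simply make explicit the three details it leaves to the reader (the dehomogenization/specialization bound for the first inequality, $d(f,g,h)\ge 1$ for the second, and nonnegativity of the vanishing orders of holomorphic forms for the third). The only cosmetic caveat is the dehomogenization convention: for $Q_{f,g,h}(g/f,h/f)=0$ to hold one must set the \emph{first} variable to $1$ (the paper's ``last variable'' is a slip), but this does not affect your inequality, which holds for either convention.
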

\begin{proof}  This follows from Theorem \ref{starithm} and the fact that $\deg{\mathcal C(f, g, h)}= \deg{P_{f, g, h}}$. 
  \end{proof}

Now, we prove the key lemma. 
  
\begin{Lem}\label{l-5} We have the following: 
\begin{itemize}
\item[(i)] The sets $X_l$ are locally closed, $X_1=\emptyset$, and the set of  all $\Xi- \left(\mathbb C f+ \mathbb C g\right)$ such that
    $\deg{P_{f, g, h}}$ is largest possible is open (well--defined because of Lemma \ref{l-4}).
\item[(ii)] For each $l$ such that $X_l\neq \emptyset$, the set of all $h\in X_l$ such that $\deg{Q_{f, g, h}}$ is largest possible in $ X_l$  is  an open set in $X_l$.
\item[(iii)] The sets $Y_k\cap X_l$ are locally closed, and $Y_k$ are constructible sets for all $k, l$.
  \end{itemize}
  \end{Lem}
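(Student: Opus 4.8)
The plan is to deduce all three assertions from the lower semicontinuity, on the Zariski open set $\Xi^\circ:=\Xi-\left(\mathbb{C}f+\mathbb{C}g\right)$, of the two integer valued functions $h\mapsto\eta_m(f,g,h)$ and $h\mapsto\kappa(h):=\deg Q_{f,g,h}(g/f,\cdot)$, both bounded above by Lemma \ref{l-4}. (All topological statements below are taken in $\Xi^\circ$; since $\Xi^\circ$ is Zariski open in $\Xi$ they agree with the corresponding statements in $\Xi$.) Put $D:=\left[\mathbb{C}(\mathfrak{R}_\Gamma):\mathbb{C}(g/f)\right]$. Combining Theorem \ref{starithm} with Lemma \ref{l-1} and Lemma \ref{l-3} gives, for every $h\in\Xi^\circ$,
\[
D\cdot\deg P_{f,g,h}=\eta_m(f,g,h)\cdot\kappa(h),\qquad d(f,g,h)=\frac{D}{\kappa(h)}.
\]
A product of nonnegative, bounded, integer valued lower semicontinuous functions is again lower semicontinuous, so $h\mapsto\deg P_{f,g,h}$ is lower semicontinuous on $\Xi^\circ$. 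This yields (i): $X_l=\{h:\deg P_{f,g,h}=l\}=\{h:\deg P_{f,g,h}\ge l\}\setminus\{h:\deg P_{f,g,h}\ge l+1\}$ is a difference of two open sets, hence locally closed; only finitely many values occur, so the maximum $L$ is attained and $X_L=\{h:\deg P_{f,g,h}\ge L\}$ is open; and $X_1=\emptyset$ because $h\in\Xi^\circ$ forces $f,g,h$ to be linearly independent. It yields (ii): $\kappa$ restricts to a bounded lower semicontinuous function on each $X_l$, so if $X_l\ne\emptyset$ its maximum $\kappa_l$ on $X_l$ is attained and $\{h\in X_l:\kappa(h)=\kappa_l\}=\{h\in X_l:\kappa(h)\ge\kappa_l\}$ is open in $X_l$. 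And it yields (iii): $Y_k=\{h:\kappa(h)=k\}$ is a difference of two closed sets, hence locally closed and a fortiori constructible, and $Y_k\cap X_l$ is an intersection of two locally closed sets.

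So it remains to establish the two semicontinuity statements. For $\eta_m$, recall
\[
\eta_m(f,g,h)=\frac{m}{4\pi}\iint_{\Gamma\backslash\mathbb{H}}\frac{dx\,dy}{y^2}-\sum_{\mathfrak{a}\in\mathfrak{R}_\Gamma}\min\left(\nu_{\mathfrak{a}}(f),\nu_{\mathfrak{a}}(g),\nu_{\mathfrak{a}}(h)\right).
\]
Only the finitely many $\mathfrak{a}$ in the common support of $\mathrm{div}(f)$ and $\mathrm{div}(g)$ contribute, and there the term takes values in $\{0,1,\dots,\min(\nu_{\mathfrak{a}}(f),\nu_{\mathfrak{a}}(g))\}$. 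Since vanishing to a prescribed order at $\mathfrak{a}$ is a system of linear conditions on a weight $m$ modular form, $\{h\in\Xi:\nu_{\mathfrak{a}}(h)\ge j\}$ is a linear subspace of $\Xi$ for each $j$, so every set $\{h:\sum_{\mathfrak{a}}\min(\nu_{\mathfrak{a}}(f),\nu_{\mathfrak{a}}(g),\nu_{\mathfrak{a}}(h))\ge c\}$ is a finite union of linear subspaces (one per integer tuple $(j_{\mathfrak{a}})$ with $0\le j_{\mathfrak{a}}\le\min(\nu_{\mathfrak{a}}(f),\nu_{\mathfrak{a}}(g))$ and $\sum_{\mathfrak{a}}j_{\mathfrak{a}}\ge c$), hence Zariski closed. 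Thus that sum is upper semicontinuous in $h$, and $\eta_m(f,g,h)$ is lower semicontinuous.

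The substantive step, which I expect to be the main obstacle, is the lower semicontinuity of $\kappa$. Fix a basis $f=f_0,\ g=f_1,\ f_2,\dots,f_{s-1}$ of $\Xi$ and set $\xi_i:=f_i/f\in\mathbb{C}(\mathfrak{R}_\Gamma)$, so $\xi_0=1$, $\xi_1=u:=g/f$, and $h/f=t_0+t_1u+\sum_{i\ge 2}t_i\xi_i$ when $h=\sum_i t_i f_i$. As $t_0+t_1u\in\mathbb{C}(u)$, one has $\kappa(h)=[\mathbb{C}(u)(h/f):\mathbb{C}(u)]=[\mathbb{C}(u)(\sum_{i\ge 2}t_i\xi_i):\mathbb{C}(u)]$, which depends only on $t':=(t_2,\dots,t_{s-1})$. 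Let $E:=\mathbb{C}(u)(\xi_2,\dots,\xi_{s-1})$, a finite extension of $\mathbb{C}(u)$, necessarily separable (characteristic zero); by the primitive element theorem $E/\mathbb{C}(u)$ has only finitely many intermediate fields $M$. For each such $M$ the set $\{t':\sum_{i\ge 2}t_i\xi_i\in M\}$ is the kernel of the $\mathbb{C}$-linear map $t'\mapsto\sum_{i\ge 2}t_i\xi_i+M$ into the $\mathbb{C}$-vector space $E/M$, hence a linear subspace of $\mathbb{C}^{s-2}$, proper unless $M=E$ (as $\xi_2,\dots,\xi_{s-1}$ generate $E$ over $\mathbb{C}(u)$). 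Since $\kappa(h)\le k$ exactly when $\sum_{i\ge 2}t_i\xi_i$ lies in some intermediate field of degree $\le k$ over $\mathbb{C}(u)$, the set $\{h\in\Xi^\circ:\kappa(h)\le k\}$ equals $\Xi^\circ$ intersected with the preimage, under the linear projection $\Xi\to\mathbb{C}^{s-2}$, $h\mapsto t'$, of the finite union $\bigcup_{[M:\mathbb{C}(u)]\le k}\{t':\sum_{i\ge 2}t_i\xi_i\in M\}$, hence Zariski closed in $\Xi^\circ$. Therefore $\kappa$ is lower semicontinuous on $\Xi^\circ$, with maximum $[E:\mathbb{C}(u)]$ attained on the nonempty open set of $h$ with $\mathbb{C}(u)(h/f)=E$; together with the first paragraph this proves the lemma.
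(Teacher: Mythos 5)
Your proof is correct, but it follows a genuinely different route from the paper's. The paper works directly with Fourier expansions: for each candidate degree $l$ it expands the cusp forms $f^{\alpha_0}g^{\alpha_1}h^{\alpha_2}\in S_{lm}(\Gamma)$, observes that the coefficient vector of $P_{f,g,h}$ solves a linear system whose entries are polynomials in the coordinates of $h$, and extracts every openness/closedness statement from the vanishing and non-vanishing of minors of that system (Cramer's rule then controls which coefficients $a_{(\beta_0,\beta_1,\beta_2)}$ are non-zero, hence the degree of $Q_{f,g,h}(g/f,\cdot)$). You instead reduce everything to lower semicontinuity of the two factors in the identity $\left[\mathbb C(\mathfrak R_\Gamma):\mathbb C(g/f)\right]\cdot\deg P_{f,g,h}=\eta_m(f,g,h)\cdot\deg Q_{f,g,h}(g/f,\cdot)$: for $\eta_m$ via the linearity of prescribed-vanishing conditions on $h$, and for $\deg Q_{f,g,h}(g/f,\cdot)=\left[\mathbb C(g/f)(h/f):\mathbb C(g/f)\right]$ via the finiteness of the set of intermediate fields of the finite separable extension $\mathbb C(g/f)(\xi_2,\dots,\xi_{s-1})/\mathbb C(g/f)$ --- essentially the mechanism behind \cite[Theorem 1.4]{MuKo}. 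I checked the points where this could break: both factors are $\ge 1$, so their product is indeed lower semicontinuous; $\{h:\nu_{\mathfrak a}(h)\ge j\}$ is a linear subspace of $\Xi$ also at elliptic points and cusps; and $h/f$ and $\sum_{i\ge 2}t_i\xi_i$ generate the same field over $\mathbb C(g/f)$, so the degree really only depends on $(t_2,\dots,t_{s-1})$. The trade-off: the paper's minors are explicit polynomials in Fourier coefficients (boundable via the Sturm bound), which serves the computational aims of \cite{MuKo}; your argument is shorter and more conceptual, and yields slightly more --- each $Y_k$ is locally closed rather than merely constructible, and $\{h:\deg Q_{f,g,h}(g/f,\cdot)\le k\}$ is in fact a finite union of linear subspaces of $\Xi$.
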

\begin{proof}  We let
 $h=\lambda_0 f+ \lambda_1 g+\lambda_2 f_2+\cdots + \lambda_{s-1} f_{s-1}\in \Xi- \left(\mathbb C f+ \mathbb C g\right)$.
  Let $l\ge 1$ be an integer. For  $\alpha=(\alpha_0, \alpha_1, \alpha_2)\in \mathbb Z^3_{\ge 0}$
such that $|\alpha|\overset{def}{=}\alpha_0+ \alpha_1 +\alpha_2=l$, and
$(i_0, i_1, \ldots, i_{s-1}) \in \mathbb Z^s_{\ge 0}$ such that $\sum_{j=0}^{s-1} i_j=\alpha_2$, we consider a cusp form 
$$
f^{\alpha_0+i_0}g^{\alpha_1+i_1} f_2^{i_2}\cdots f_{s-1}^{i_{s-1}} =\sum_{n=1}^\infty \ b_n(\alpha, i_0, i_1, \ldots, i_{s-1}) q^n \in S_{lm}(\Gamma).
$$
We define homogeneous polynomials for all $n\ge 1$ as follows: 
$$
  B_{\alpha, n} \left(\lambda_0, \ldots,\lambda_{s-1}\right) =\sum_{i_0+i_1+\cdots + i_{s-1}=\alpha_2} \binom{\alpha_2}{i_0, i_1, \ldots, i_{s-1}} b_n(\alpha, i_0, i_1, \ldots, i_{s-1})
  \lambda_0^{i_0}\cdots \lambda_{s-1}^{i_{s-1}}
  $$
 We order all $\binom{l+2}{2}$ $\alpha$'s in the lexicographical order: $(0, 0, l)< (0, 1, l-1)< \cdots < (l, 0, 0)$.
 Then, we can consider vectors:
  $$
 C_n\left(\lambda_0, \ldots,\lambda_{s-1}\right) \overset{def}{=}\left(B_{(0, 0, l), n} \left(\lambda_0, \ldots,\lambda_{s-1}\right), \ldots, B_{(l, 0, 0), n} \left(\lambda_0, \ldots,\lambda_{s-1}\right)\right) \in
 \mathbb C^{\binom{l+2}{2}}.
  $$
  Let $C\left(\lambda_0, \ldots,\lambda_{s-1}\right)$ be an infinite matrix with rows $C_n\left(\lambda_0, \ldots,\lambda_{s-1}\right)$. Next, the reader can easily check that
  $(a_{(0, 0, l)}, \ldots, a_{(l, 0, 0)})\in \mathbb C^{\binom{l+2}{2}}$ is the solution of  the system of homogeneous equations: \footnote{ By using Sturm bound for $S_{lm}(\Gamma)$ we can bound the number of
  equation, but this is not important here.}  
  \begin{equation} \label{system}
  \sum_\alpha \ B_{\alpha, n} \left(\lambda_0, \ldots,\lambda_{s-1}\right) a_\alpha= 0, \ \ n\ge 1,
  \end{equation}
  if and only if
  $$
 \sum_{\alpha}   a_\alpha  f(z)^{\alpha_0}g(z)^{\alpha_1} h(z)^{\alpha_2}=0, \ \ z\in \mathbb H.
  $$
 Equivalently, the corresponding homogeneous polynomial belongs to the ideal of the curve $\mathcal C(f, g, h)$. This implies that the system (\ref{system}) has only a trivial solution if
 $l< \deg{P_{f, g, h}}$ while for $l\ge \deg{P_{f, g, h}}$ there always exist a non--trivial solution. The solution is unique up to a multiplication by a non--zero element in $\mathbb C$ if
 $l= \deg{P_{f, g, h}}$, and determines $P_{f, g, h}$. On the other hand, by Linear algebra, the system (\ref{system}) has a zero solution if and only if $\mathbb C$--span of vectors
 $C_n\left(\lambda_0, \ldots,\lambda_{s-1}\right)$, $n\ge 1$, is whole $\mathbb C^{\binom{l+2}{2}}$. Equivalently, there is a minor of $C\left(\lambda_0, \ldots,\lambda_{s-1}\right)$ of
 size $\binom{l+2}{2} \times \binom{l+2}{2}$ which is non--zero. Similarly, when $l= \deg{P_{f, g, h}}$,  $\mathbb C$--span of vectors
 $C_n\left(\lambda_0, \ldots,\lambda_{s-1}\right)$, $n\ge 1$, must be of codimension one in $\mathbb C^{\binom{l+2}{2}}$. Equivalently, there must exists a non--zero minor of
 $C\left(\lambda_0, \ldots,\lambda_{s-1}\right)$ of size $\binom{l+2}{2}-1 \times \binom{l+2}{2}-1$ while all minors of size 
 size $\binom{l+2}{2} \times \binom{l+2}{2}$ must be equal to zero.

 Above discussion implies the first claim in (i) i.e., the sets $X_l$ are locally closed. $X_1=\emptyset$ since $f, g$, and $h$ are linearly independent.
 Let $L$ be the largest possible degree of $\deg{P_{f, g, h}}$ when $h$ ranges over
 $\Xi- \left(\mathbb C f+ \mathbb C g\right)$.
 We prove that  $X_L$ is open. First we note that all 
 $\binom{L +2}{2} \times \binom{L+2}{2}$ minors are identically equal to zero  on $\mathbb C^s$. Indeed, if there would be a non--identically zero minor, say $M$,
 of that size, then the system (\ref{system})
 has a trivial solution when $M(\lambda_0, \ldots, \lambda_{s-1})\neq 0$.
 Hence, $\deg{P_{f, g, h}}> L$ for $h$ such that $M(\lambda_0, \ldots, \lambda_{s-1})\neq 0$ which is a contradiction. Now, $X_L$ is open since $h\in
 X_L$ if and only if there exists a minor $M$ of size $\binom{L +2}{2}-1 \times \binom{L+2}{2}-1$  such that $M(\lambda_0, \ldots, \lambda_{s-1})\neq 0$. This completes the proof of (i).

 Now, we prove (ii). Assume that $l$ satisfies $X_l\neq \emptyset$. Let $L$ be the largest possible degree of $\deg{Q_{f, g, h}}$ when $h$ ranges over $X_l$. Let $h\in X_l$.
 Then, the solution of the corresponding system
 (\ref{system}) satifies $a_\beta=0$ for all $\beta$ of the form $\beta=(\beta_0, \beta_1, L')$ with $L'>L$. Of course, for  $h\in X_l$ 
 such that $a_\beta\neq 0$, for some $\beta$ of the form $(\beta_0, \beta_1, L)$, we have  $\deg{Q_{f, g, h}}=L$.

Open sets $M(\lambda_0, \ldots, \lambda_{s-1})\neq 0$, where $M$ ranges over all minors
 $\binom{l+2}{2}-1 \times \binom{l+2}{2}-1$ cover $X_l$. Let us fix such minor $M$. Then, $M$ is obtained by removing some column (and using  $\binom{l+2}{2}-1$ rows but they are not important here),
 say $\gamma$-th. Then, rewritting the system (\ref{system}) in the form
 $$
  \sum_{\alpha\neq \gamma} \ B_{\alpha, n} \left(\lambda_0, \ldots,\lambda_{s-1}\right) a_\alpha=  - B_{\gamma, n} \left(\lambda_0, \ldots,\lambda_{s-1}\right) a_\gamma, \ \ n\ge 1,
$$
we see that the solution is given by
\begin{equation} \label{system-1}
  a_\alpha= - a_\gamma \frac{M^\alpha\left(\lambda_0, \ldots,\lambda_{s-1}\right)}{M\left(\lambda_0, \ldots,\lambda_{s-1}\right)}, \ \ \text{for all} \ \ \alpha\neq a_\gamma,
\end{equation}
where the determinant $M^\alpha$ is obtained from $M$ by replacing $\alpha$-th column with the column of the corresponding $B_{\gamma, n}$'s (i.e., using $n$'s that determine rows  of $M$).
Using (\ref{system-1}), we see that $a_\gamma\neq 0$. Also, we see that $a_\alpha\neq 0$ if and only if  $M^\alpha\left(\lambda_0, \ldots,\lambda_{s-1}\right)\neq 0$. Let us fix now arbitrary $\alpha$. Then,
 letting  $M$ vary, we see that the set of all
 $h\in X_l$ such that $a_\alpha\neq 0$ is open in $X_l$. This immediately implies that the set of all $h$ such that one of the coefficients $a_\beta$, for  $\beta$ of the form
 $(\beta_0, \beta_1, L)$, is non--zero is open. This completes the proof of (ii). 

 For the claim (iii), we recall that a constructible set is a finite union of locally closed sets. Then, it is enough to prove that
 $Y_k\cap X_l$ is locally closed for each $l$. As in the previous part of the proof, this intersection corresponds to the solution of the system (\ref{system}) such that $a_\alpha=0$, for all
 $\alpha$ of the form $\alpha=(\alpha_0, \alpha_1, k')$ with $k'>k$, and there exists $\beta$ such that $a_\beta\neq 0$ where $\beta=(\beta_0, \beta_1, k)$. As before, using (\ref{system-1}) this set is
 intersection of one closed set, one open set and $X_l$. But since $X_l$ is itself intersection of a closed and an open set, see the same holds for $Y_k\cap X_l$. This means that  $Y_k\cap X_l$
 is locally closed. This completes the proof of the lemma. 
\end{proof}

\vskip .2in
Finally, we complete the proof of Theorem \ref{mainthm}. (i) follows from Lemma \ref{l-5} (i) and Lemma \ref{l-3}. (ii) follows from Lemma \ref{l-5} (ii) and Lemma \ref{l-2}. The first claim in
(iii) follows also from Lemma \ref{l-2} and Lemma \ref{l-5} (iii). The second claim of (iii) is contained in Lemma \ref{l-1}.

\end{document}